\newcommand{\dist}{\mbox{\rm dist}}
\newcommand{\Riem}{{\rm Riem}}
\newcommand{\Ricci}{ {\rm Ricci}}
\newcommand{\ep}{\varepsilon}
\newcommand{\grad}{\nabla}
\newcommand{\tIdist}{ {^{^{\ti I}}\! \dist }}
\newcommand{\Idist}{ {^{^I}\! \dist }}
\newcommand{\si}{\sigma}
\newcommand{\intersect}{\cap}
\newcommand{\lap}{\Delta}
\newcommand{\boundary}{\partial}
\newcommand{\on}{\over}
\newcommand{\ti}{\tilde}
\newcommand{\upI}{ {}^{^{ I}}\! }
\newcommand{\upgz}{^{^{g_0}}\!}
\newcommand{\upgt}{^{^{g(t)}}\!}
\newcommand{\uph}{^{^h}\!}
\newcommand{\upg}{^{^g}\!}
\newcommand{\partt}{ {\partial \on {\partial t}} }
\newcommand{\timess}{×}
\newcommand{\br}{ {\begin{rema}}}
\newcommand{\bl}{ {\begin{lemm}} }
\newcommand{\er} { {\end{rema}}}
\newcommand{\el}{{\end{lemm}} }
\newcommand{\bc}{  {\begin{coro}} }
\newcommand{\ec}{  {\end{coro}} }
\newcommand{\de}{\delta}
\newcommand{\lapg}{{\upg \lap}}
\newtheorem{defi}{Definition}[section]
\newtheorem{prob}[defi]{Problem}
\newtheorem{rema}[defi]{Remark}
\newtheorem{theo}[defi]{Theorem}
\newtheorem{lemm}[defi]{Lemma}
\newtheorem{coro}[defi]{Corollary}
\begin{document}

\title{ Local results for flows whose speed or height satisfies a bound of the form $\frac c t$ }
\author{Miles Simon\\
Universit{ä}t Freiburg}
\maketitle 
 \numberwithin{equation}{section}
\numberwithin{defi}{section}

\thispagestyle{empty}
\mbox{}

\begin{abstract}

In this paper we prove local results for solutions to the Ricci flow (heat flow) whose speed (height) is bounded
by $\frac c t$ for some time interval $ t \in (0,T)$.
These results are contained in chapter 7 of \cite{HaSi}.
In \cite{HaSiPap1} further results from \cite{HaSi} may be found. In particular, there 
we construct short time solutions to Ricci flow
for a class of compact Riemannian manifolds with isolated 
conelike singularities. 
The resulting solutions satisfy a bound of this form (the speed is bounded by $\frac c t$ for some time interval
$t  \in (0,T)$).

\end{abstract}

\vskip  0.07 true in

\section{Local results for Ricci flow and reaction diffusion equations in general}

In \cite{Ha82}, R.Hamilton introduced the Ricci flow. A smooth family of metrics
$(M,g(t))_{t \in [0,T)}$ is a solution to the Ricci flow with initial value $g_0$, or is a 
Ricci flow of $g_0$ if
\begin{eqnarray}
&&\partt g(t)= - 2 \Ricci(g(t)) \ \forall \ t \in [0,T), \cr
&&g(0) = g_0.
\end{eqnarray}
Ricci flow has been extensively studied, and has led to many results in topology and geometry: see
for example \cite{BoWi},\cite{CCCY}, \cite{Ha82}
\cite{Ha86},
\cite{Ha95},
\cite{Ha95a},
\cite{Ha99},
\cite{Pe1},
\cite{Pe2}
(see \cite{CaZh}, \cite{KlLo} and \cite{MoTi} for good expositions of the works \cite{Pe1},\cite{Pe2}).

In Theorem 6.1 (and the proof thereof) of \cite{HaSiPap1}, 
we see that  for certain compact Riemannian manifolds with cone-like singularities, 
it is possible to obtain a short time solution 
to the Ricci flow, and also to 
obtain control over the curvature, volume and diameter for some well defined time interval.
This control on the curvature (curvature behaves like ${c \on t}$) is global, and suffices 
to prove the main theorem, Theorem 6.1. there.
We are, however, also interested in the local behaviour of such solutions
(see also \cite{ScScSi} and \cite{Si} for further examples of solutions to Ricci flow whose speed
is bounded by $ \frac{c}{t}$).

\vskip 0.1 true in

The following is taken directly from Chapter 7 of \cite{HaSi}).

\begin{prob}
For the class of solutions whose curvature is bounded by ${c \on t}$ on some time interval $[0,T)$: 
if a region  is diffeomorphic to a Euclidean ball and has bounded geometry at the curvature and $C^0$
level at time zero,
does some smaller region (contained in the initial region)
remain bounded geometrically at the curvature and $C^0$ level for a well controlled time interval?
\end{prob}

In the paper \cite{Pe1}, G.Perelman proved a local result  (see \cite{Pe1} Section 10) in the setting
that one has a solution to Ricci flow which is compact, has bounded curvature at each time, 
and for which the absolute value of the Riemannian curvature on $B(g_0)(x_0,r_0)$ is 
bounded by $ {1 \on r_0}$, and for which $B(g_0)(x_0,r_0)$ is close in some
$C^0$  sense to 
the Euclidean ball of radius $r_0$  (see Theorem 10.3 of  \cite{Pe1} for an exact statement of the
theorem).
There, the behaviour like $ c \on t$ is not assumed, but is in fact proved in a seperate theorem, Theorem 10.1
of \cite{Pe1}.
In Theorem 10.1 he proves that the curvature at a point $x_0$ behaves like ${ c \on t}$ 
(for $t \in [0,\de^2 r^2]$,  $\de = \de(n) >0$ ) under the
even weaker assumptions that the scalar curvature on  $B(g_0)(x_0,r_0)$ is bounded from below
by $-r^2_0$, and  $B(g_0)(x_0,r_0)$ is close in some $C^0$ sense to the Euclidean ball of radius $r_0$:
see theorem 10.1 of \cite{Pe1} for an exact statement of the theorem.

Let us define more precisely what we mean by:  
{\it the geometry is bounded on the curvature and $C^0$ level}.

\begin{defi}\label{boundedgeo}
The Euclidean $n$-ball of radius $r>0$ $ U = {\upI B_r(0)}$ with a Riemannian metric $g_0$ is 
geometrically bounded by $c$ if
\begin{eqnarray}
&&\sup_{U} {\upgz |} \Riem(g_0) | \leq {c \on r} \\
&& {c I} \leq g_0  \leq {1 \on c} I
\end{eqnarray}
where $I$ is the standard metric on ${\upI B_r(0)}$.
\end{defi}

Clearly the set is topologically the same as a ball (per definition).
Hence a Euclidean ball with a metric $g_0$ is {\it bounded geometrically} if the given Riemannian  metric
$g_0$ can be compared to the standard metric $I$ on the curvature and $C^0$ level.
 
In the following theorem we show that if $U$ is a Euclidean ball of radius one, and $(U,g_0)$ is 
bounded geometrically by some constant $c$,
and $(\bar U,g(t))_{t \in [0,T)}$ is a smooth solution to the Ricci flow
whose  curvature is  bounded above by ${c \on t}$, then, 
for a well controlled time interval,
a Euclidean ball of radius $r(c,n)>0$ (contained in the initial ball with the same middle point) with the metric $g(t)$
is bounded geometrically by $4c$.

\begin{theo}\label{pseudolemma}

Let $(U= {\upI B_1(0)},g_0)$ be geometrically bounded by $c_0$.
Define $${\upI \dist}(x) = \dist(I)(\boundary U ,x),$$ where the distance is taken with respect to the Euclidean metric $I.$
Let $( {\upI B_{1 + \de}(0)},g(t))_{t \in [0,T)}$ ($\de >0$) be any smooth
solution to the Ricci flow satisfying $g(0) = g_0$ and
\begin{eqnarray}
t{\upgt |}\Riem(x,t)| && \leq  c_0, \label{ctb} \\
\end{eqnarray} \label{pseudo}
 for all $x \in {\upI B_{1 + \de}(0)}$ and for all $t \in [0,T)$.
Then there exists an $N = N(c_0,n),$ such that
\begin{eqnarray*}
{\upgt |}\Riem(x,t)|\Idist^2(x) < N,
\end{eqnarray*}
for all $ x \in U $ with $\Idist^2(x) \geq Nt,$
and $t \leq T.$ 
In particular, $({\upI B_{1 \on 4N} (0)},g(t))$ is bounded geometrically by $e^1 c_0$
for all $t \leq  \min( {1 \on 4N},T).$ 
\end{theo}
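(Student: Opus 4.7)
My plan is to prove the pointwise estimate $|\Riem(x,t)|\Idist^2(x) < N$ by a Perelman-style contradiction/blow-up argument, and then deduce the ``in particular'' conclusion by integrating the flow equation on the sub-ball $B_{1/(4N)}(0)$ with the pointwise estimate serving as a uniform curvature bound.

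Suppose the pointwise estimate fails. For some sequence $N_k \to \infty$ there exist Ricci flows $g_k$ satisfying the hypotheses with the same constant $c_0$ together with points $(x_k, t_k)$ with $\Idist^2(x_k) \geq N_k t_k$ and $|\Riem(x_k, t_k)|\Idist^2(x_k) \geq N_k$. By a point-picking argument in the spirit of Perelman's proof of Theorem 10.1 of \cite{Pe1}, I replace $(x_k, t_k)$ by $(\tilde x_k, \tilde t_k)$ still violating the bound and arranged so that $Q_k := |\Riem(\tilde x_k, \tilde t_k)|$ dominates $|\Riem|$ by $4 Q_k$ on the parabolic neighborhood $\{(y,s) : d_I(y, \tilde x_k) \leq \Idist(\tilde x_k)/4,\ \tilde t_k - 1/(4 Q_k) \leq s \leq \tilde t_k\}$. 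Rescaling via $\tilde g_k(s) := Q_k\, g_k(\tilde t_k + s/Q_k)$ in the coordinates $y = \sqrt{Q_k}\,(x - \tilde x_k)$, this parabolic neighborhood becomes a rescaled parabolic neighborhood of spatial radius $\geq \sqrt{N_k}/4 \to \infty$ and time length $1/4$, on which $|\tilde \Riem| \leq 4$ and $|\tilde \Riem|(\tilde x_k, 0) = 1$.

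Shi's local derivative-of-curvature estimates combined with Hamilton's Cheeger--Gromov compactness theorem produce a subsequential smooth limit Ricci flow $\tilde g_\infty$ on $\R^n \times [-s_*, 0]$ with $|\tilde \Riem| \leq 4$ and $|\tilde \Riem|(\tilde x_\infty, 0) = 1$. A key observation is that $Q_k \tilde t_k$ is bounded both above (by $c_0$, from $t|\Riem| \leq c_0$) and \emph{below} by a positive constant: Shi's bound on $|\partial_s \tilde \Riem|$ on the rescaled neighborhood forbids $|\tilde \Riem|(\tilde x_k, \cdot)$ from jumping from $\leq c_0/Q_k \to 0$ at the rescaled initial time $s = -Q_k \tilde t_k$ up to the value $1$ at $s = 0$ over a vanishingly small time interval. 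Hence $s_* = \lim Q_k \tilde t_k \in (0, c_0]$, and the limit flow extends back to $s = -s_*$. The rescaled initial metric $\tilde g_k(-Q_k \tilde t_k)$ in the coordinates $y$ has components $(g_0^{(k)})_{ij}(\tilde x_k + y/\sqrt{Q_k})$, which by the $C^0$ bound $c_0 I \leq g_0^{(k)} \leq (1/c_0) I$ converge to a constant (hence flat) metric on $\R^n$, while its curvature norm is $\leq c_0/Q_k \to 0$. By uniqueness of bounded-curvature Ricci flow on $\R^n$ starting from a flat metric, $\tilde g_\infty$ is identically flat, contradicting $|\tilde \Riem|(\tilde x_\infty, 0) = 1$.

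For the ``in particular'' claim, on $B_{1/(4N)}(0)$ we have $\Idist(x) \geq 1 - 1/(4N)$, so for $N = N(c_0, n)$ large the condition $\Idist^2(x) \geq Nt$ is satisfied throughout $t \in [0, \min(1/(4N), T)]$. The pointwise estimate then yields a uniform bound $|\Riem(x,t)| \leq c(n)\, N$ on $B_{1/(4N)}(0) \times [0, \min(1/(4N), T)]$, and integrating $\partial_t g = -2 \Ricci$ with $|\Ricci| \leq c'(n)\, N$ over a time interval of length $1/(4N)$ gives $|\log(g_{ij}(t)/g_{ij}(0))|$ bounded by a dimensional constant; combined with $c_0 I \leq g_0 \leq (1/c_0) I$ and the curvature bound, this yields geometric boundedness by $e\, c_0$ after choosing $N$ large enough. \emph{The main obstacle} is ensuring that in the blow-up the rescaled initial time is non-degenerate ($Q_k \tilde t_k$ stays bounded below), which is what allows the $C^0$ control on $g_0$ to enter the limit as flat initial data and trigger the uniqueness rigidity; Shi-type bounds on $|\partial_s \Riem|$ are the tool that closes this gap.
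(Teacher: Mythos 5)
Your overall strategy (blow-up along a sequence $N_k\to\infty$, Cheeger--Gromov limit, rigidity of the flat limit) is a genuinely different route from the paper, but as written it has a gap at exactly the point you yourself flag as ``the main obstacle''. You justify the lower bound on $Q_k\tilde t_k$, and more generally the assertion that the rescaled curvature cannot jump from $\leq c_0/Q_k$ at the rescaled initial time to $1$ at $s=0$, by appealing to ``Shi's bound on $|\partial_s\tilde\Riem|$''. Shi's local derivative estimates are \emph{interior} estimates: on a region where $|\Riem|\leq K$ for $s\in[-Q_k\tilde t_k,0]$ they control $|\grad^m\Riem|$ (hence $\partial_s\Riem=\lap\Riem+Q(\Riem)$) only for $s>-Q_k\tilde t_k$, with constants that blow up as $s$ approaches the initial time, precisely because no bounds on derivatives of the initial data are assumed --- and here the hypotheses give only $C^0$ control of $g_0$ and $|\Riem(g_0)|\leq c_0$, nothing on $\grad\Riem(g_0)$. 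So there is no uniform bound on $\partial_s|\tilde\Riem|$ up to the initial slice, and the ``no jump'' step does not follow from Shi. The same deficiency undermines the compactness step: Hamilton's compactness theorem needs uniform bounds on all covariant derivatives of curvature on the time interval in question, which are unavailable at the rescaled initial time; the limit flow is therefore only produced on time intervals bounded away from $-s_*$, and the claim that it attains flat initial data (which is what you need in order to invoke uniqueness of bounded-curvature flows starting at the flat metric) again requires exactly the quantitative ``small initial curvature stays small for a definite time'' estimate that is missing.

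What actually closes this gap is a localized maximum principle for $|\Riem|^2$: if $|\Riem|\leq c_1$ on a Euclidean-size ball of radius $\sim\sqrt{N_k}$ (with $C^0$-controlled metric) and $|\Riem|\leq\ep_k\to 0$ at the initial time, then $|\Riem|$ at the center stays small for a controlled time --- this is the paper's Lemma \ref{riemsmall}, proved with a cutoff built from $\dist^2$, the evolution inequality for $|\Riem|^2$ and Calabi's trick. Note that once you have such a lemma, your blow-up machinery becomes unnecessary: since $Q_k\tilde t_k\leq c_0$ is bounded above, the lemma applied at $(\tilde x_k,0)$ already forces $|\tilde\Riem|(\tilde x_k,0)\ll 1$ for $N_k$ large, contradicting $|\tilde\Riem|(\tilde x_k,0)=1$ directly, with no limit, no Shi estimates and no uniqueness theorem. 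This is in essence the paper's proof: it takes a \emph{first} violating time and point (so no point-picking is needed, the bound $|\Riem|\,\Idist^2\leq N$ being available at all earlier times), performs one rescaling making the violating value equal to $1$, splits into the two cases $\tilde t_0\leq\frac12$ and $\tilde t_0>\frac12$ (using the hypothesis $t|\Riem|\leq c_0$ in the second case), and applies Lemma \ref{riemsmall}. Your ``in particular'' step is fine in outline, though the distortion factor you get from integrating $\partial_t g=-2\Ricci$ with $|\Ricci|\leq c(n)N$ over a time $\frac{1}{4N}$ is $e^{c(n)/2}$, a dimensional constant rather than $e$, so the constants need the same massaging there as in the statement itself.
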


\begin{proof} For the proof, $|\Riem(h)|$ will always refer to $ {\uph |}\Riem(h) |.$ 
Choose $N$ big, and 
assume that the theorem does not hold.
Then there must be a first time
$t_0 \in [0,\min(T , {1 \on 4N} ))$ and point $x_0 \in U$ 
where the theorem does not hold.
\begin{eqnarray*} 
&&\Idist^2(x_0)  \geq  N t_0, \cr
&& x_0 \in U, \cr
&&|\Riem(x_0,t_0)|\Idist^2(x_0) = N ,
\end{eqnarray*}
and 
\begin{eqnarray} 
|\Riem(x,t)| \Idist^2(x) \leq N \ \forall \ (x,t) \in U \timess [0,t_0]\  {\rm with } \Idist^2(x) \geq Nt \label{mainestimate}
\end{eqnarray} 

Let us rescale: ${\ti g}(x,s):= a g(x, {s \on a}),$
for $s \in [0, a t_0],$ and
$\ti I = a I.$
We define:
${\tIdist}^2(x) := \dist^2(\ti I)(\boundary U,x)$
(that is, the distance from the boundary of $U$ to the point $x$ with respect to the metric $\ti I$)
which gives us
${\tIdist}^2(x) =  a \Idist^2(x).$
Set $$a := { N \on  \Idist^2(x_0)},$$ ( $a$ is then bigger than $N$)
and for $t \in [0,t_0],$ let $\ti t := a t.$ 
Then ${\tIdist}^2(x_0) = N,$ and hence $N= |\Riem (x_0,t_0)| \Idist^2(x_0) = |\ti \Riem(x_0,\ti t_0)| {\tIdist}^2(x_0)
= |{\ti \Riem}(x_0, \ti t_0)|N,$
which implies that
$|{\ti \Riem}(x_0, \ti t_0)| = 1.$
Furthermore,
$ \ti{t_0} = { t_0 N \on  \Idist^2(x_0)} \leq 1,$
since $ \Idist^2(x_0)  \geq  N t_0.$
We also have
\begin{eqnarray}
 |{\ti \Riem}(x, \ti t)| {\tIdist}^2(x) 
 = |\Riem(x,t)|\Idist^2(x)  \leq  N  \label{Rdistbound}
\end{eqnarray}
for all $(x,\ti t) \in U\timess [0,\ti t_0]$ which satisfy 
$ {\tIdist}^2(x) \geq N{\ti t}$
in view of the facts 
\begin{itemize}
\item[{(i)}]
 ${\ti  \Idist}^2(x) \geq N{\ti t} \iff  {\Idist}^2(x) \geq N{t}$ and
\item[{(ii)}]
 $\ti t \leq {\ti t_0} \iff t \leq {t_0}$ and 
\item[{(iii)}]
equation \eqref{mainestimate} holds).
\end{itemize}
Now we consider two cases:
\begin{itemize}
\item{case 1: $  {\tIdist}^2(x_0) \geq 2N \ti t_0 (\iff \ti t_0 \leq {1 \on 2})$}
\item{case 2: $  {\tIdist}^2(x_0) < 2N \ti t_0 (\iff  \ti t_0 >   {1 \on 2})$}
\end{itemize}
Assume case one holds.
Then for $y$ satisfying $ {\tIdist}^2(y) \geq {N \on 2}$ we see that (use $\ti t_0 \leq {1 \on 2}$)
$$ {\tIdist}^2(y) \geq {N \ti t_0} \geq N \ti t $$
for all $\ti t \leq \ti t_0,$
and hence $$ |{\ti \Riem}(y, \ti t)|{\tIdist}^2(y)  \leq  N,$$
in view of \eqref{Rdistbound}. Hence, 
\begin{eqnarray}
|{\ti \Riem}(y, \ti t)| &\leq& 2 \ \forall  \ \ti t \leq \ti t_0, \label{riembound3}
\end{eqnarray}
in view of the assumption on $y$ (notice that $ {\tIdist}^2(x_0) = N \geq {N \on 2}$ 
and so $y=x_0$ is valid in \eqref{riembound3}: that is, 
$|{\ti \Riem}(x_0, \ti t)|  \leq  2$ for all 
$\ti t \leq \ti t_0$).
That is,
$$|{\ti \Riem}(y, \ti t)| \leq 2 \ \forall  \ \ti t \leq \ti t_0, \ \ \forall
y \in {\ti B}_{N \on 4}(x_0).$$
Using the fact that 
$$  {I \on c_0} \leq g_0 \leq c_0 I $$ we obtain
that 
$$ {\ti I \on c_0} \leq {\ti g}_0 \leq c_0 \ti I  $$
and $$ \sup_{ y \in {\ti B}_{N \on 4}(x_0)} | \Riem(\ti g_0)|(y) \leq {c_0 \on N},$$
since $\ti g_0 = a g_0,$ and $a = { N \on  \Idist^2(x_0)} \geq N$ and $(U,g_0) = ({}^I B_1(0),g_0)$ is geometrically
bounded by $c_0$.
We also have 
$|{\ti \Riem}(x_0,\ti t_0)| = 1.$
This contradicts Lemma \ref{riemsmall}, if $N = N(c_0,n)$ is chosen large enough.

So assume case two holds. This is equivalent to
${1 \on 2} < \ti t_0.$
Then, for $\ti t \leq {1 \on 2},$ we still have the
estimate $$ |{\ti \Riem}(y, \ti t)| \leq 2, \ \ \forall y \ \ \mbox{with} \ \ { \tIdist}^2(y) \geq {N \on 2}$$
(this may be seen as follows: $$ {\tIdist}^2(y) \geq {N \on 2} \geq N \ti t,$$ since 
$\ti t \leq {1 \on 2} < {\ti t}_0,$ and so, using
\eqref{Rdistbound}, we obtain the estimate).

For $\ti t_0 \geq \ti t > {1 \on 2},$
we have  $$ |{\ti \Riem}(y, \ti t)|  \leq {c_0 \on \ti t} \leq {2c_0},$$
in view of the assumption \eqref{ctb}.
W.l.o.g. $c_0\geq 1.$ Hence $$ |{\ti \Riem}(y, \ti t)| \leq {2c_0}, \forall t \in [{0},\ti t_0), \ \ \forall \ y \in \ti B_{N \on 4}(x_0).$$
Furthermore, $|{\ti \Riem}(x_0,\ti t_0)| = 1.$
Once again, this leads to a contradiction
for $N= N(c_0,n)$ chosen large enough, in view of Lemma \ref{riemsmall}.
(as in case 1, we have 
\begin{eqnarray} && 
    {\ti I \on c_0} \leq {\ti g}_0 \leq c_0 \ti I, \cr
 &&  \sup_{ {\ti B}_{N \on 4}(x_0)}  | Riem(\ti g_0)| \leq {c_0 \on N},
\end{eqnarray}
 and so we may apply the lemma).
\end{proof}

The following lemma, which  is  used to help prove the locality theorem
above, is a standard result from the theory of parabolic equations

 \begin{lemm}\label{riemsmall}
Let $(M,g(t))_{t \in [0,T)},$ be a complete smooth solution to Ricci flow and $(U,x_0,I),$ $U \subset M$
be isometric to  to the (open) Euclidean ball $B_{N}(0)$of radius $N,$ ($x_0 \sim 0$) and centre $0$.
Assume that:
\begin{eqnarray*}
&&  {I \on c_0} \leq g_0 \leq   c_0 I \cr
&&\sup_U |\Riem(\cdot,t)|  \leq  c_1 \  \forall \ t \in [0,T), \cr
&&\sup_U |\Riem|(\cdot,0) \leq \ep. \cr
\end{eqnarray*}
Then for all $\si^2 \geq \ep^2$ there exists $a =a(c_0,c_1,n)$ and $N_* = N_*(\si,c_0,c_1,n)> 0$
such that if $N \geq N_*$ then
\begin{eqnarray}
|\Riem|^2(x_0,t) \leq \si^2\exp^{at} \ \forall \ t \in [0,T)
\end{eqnarray}
In particular,
if $\ep \leq \exp^{-a} \de$ for some $\de>0$ then
(choose $\si = \exp^{-a} \de$: then $N_*$ is a constant depending only on $c_0,c_1,n,\de$ ) there is a
$N_* = N_*(c_0,c_1,n,\de) >0$ such that
$$|\Riem(x_0,t_0)| \leq {\de^2}, \ \forall \ t_0 \in [0,T) \intersect [0,1) $$
if $N \geq N_*$.
\end{lemm}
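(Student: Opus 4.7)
The plan is to apply the parabolic maximum principle to a spatial cutoff of $F := |\Riem|^2$. Under Ricci flow, the Bochner identity yields
\[
(\partial_t - \Delta_{g(t)}) F \leq -2|\nabla \Riem|^2 + C_n |\Riem|^3 \leq C_n c_1 F \quad \text{on } U,
\]
using $|\Riem|\leq c_1$, and the Kato inequality $|\nabla F|^2 \leq 4 F |\nabla \Riem|^2$ (valid when $F>0$) lets me retain part of the negative gradient term for later absorption. The hypotheses $c_0^{-1} I \leq g_0 \leq c_0 I$ together with $|\Riem|\leq c_1$ imply the pointwise comparability $e^{-2nc_1 t} g_0 \leq g(t) \leq e^{2nc_1 t} g_0$, and standard Ricci-flow derivative estimates control the Christoffel symbols of $g(t)$ in Euclidean coordinates. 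Consequently, a smooth cutoff $\phi$ defined in Euclidean coordinates with $|D\phi|_I + N|D^2\phi|_I \leq C/N$ automatically satisfies $|\nabla_{g(t)}\phi|_{g(t)}^2 + |\Delta_{g(t)}\phi| \leq C(c_0,c_1,n)/N$ uniformly on bounded time intervals.

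I then fix a radial cutoff $\phi \colon B_N(0) \to [0,1]$, identically $1$ on $B_{N/2}(0)$ and vanishing near $\partial B_N(0)$, and set $H := \phi^2 F$. A direct computation of $(\partial_t - \Delta_{g(t)}) H$, combined with Young's inequality to absorb the cross term $2\nabla\phi^2\cdot \nabla F$ against the retained negative $|\nabla F|^2/F$ from Kato, plus the global bound $F\leq c_1^2$, yields
\[
(\partial_t - \Delta_{g(t)}) H \leq C_n c_1 H + \tfrac{C(c_0,c_1,n)}{N} c_1^2 \quad \text{on } U\times[0,T).
\]
Since $H$ vanishes on $\partial U$ and $H(\cdot,0)\leq \epsilon^2$ on $U$, the parabolic maximum principle (i.e.\ ODE comparison for $\sup_x H(\cdot,t)$) then gives
\[
\sup_x H(x,t) \leq \Bigl(\epsilon^2 + \tfrac{C(c_0,c_1,n)}{N}\Bigr) e^{at}, \qquad a:=C_n c_1,
\]
and since $\phi(x_0)=1$, this is the desired bound at $x_0$.

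For any prescribed $\sigma^2 \geq \epsilon^2$ I choose $N_*=N_*(\sigma,c_0,c_1,n)$ so large that $C/N_*$ is dominated by $\sigma^2 - \epsilon^2$ on the time range where $\sigma^2 e^{at} < c_1^2$ (beyond this range the conclusion is automatic from $F \leq c_1^2$); the borderline case $\sigma = \epsilon$ is handled by an arbitrarily small enlargement of $\sigma$, or equivalently by replacing $a$ by $a+\eta$. The ``in particular'' assertion is then the specialization $\sigma = e^{-a}\delta$ restricted to $t \leq 1$.

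The step I expect to be most delicate is the cutoff computation: $(\partial_t - \Delta)F$ carries the genuinely useful buffer $-|\nabla \Riem|^2$, but the Young-type absorption of $\nabla\phi^2 \cdot \nabla F$ must be calibrated against it through Kato in the correct proportion so that no uncontrollable gradient term remains. Once that is done, the remaining ingredients -- metric comparability of $g(t)$ with $I$, control of $\Delta_{g(t)}$ acting on a Euclidean-defined cutoff, and the scalar ODE comparison -- are routine.
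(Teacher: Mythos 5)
Your overall scheme (evolution inequality for $F=|\Riem|^2$, localization, maximum principle with an exponentially growing comparison) is the right general shape, but there is a genuine gap at the step you treat as routine: the claim that the hypotheses plus ``standard Ricci-flow derivative estimates'' control the Christoffel symbols of $g(t)$ in the given Euclidean coordinates, hence give $|\Delta_{g(t)}\phi|\leq C(c_0,c_1,n)/N$ for a cutoff $\phi$ defined in those coordinates. The lemma only assumes $c_0^{-1}I\leq g_0\leq c_0 I$ (a $C^0$ comparison) and curvature bounds; it gives no control whatsoever on first coordinate derivatives of $g_0$. For example, pulling back the flat metric by a bi-Lipschitz diffeomorphism with bi-Lipschitz constant close to $1$ but with huge second derivatives produces a $g_0$ satisfying all the hypotheses (with $\ep=0$) whose Christoffel symbols in the given coordinates are arbitrarily large; Ricci flow leaves this metric fixed, so the largeness persists for all $t$. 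Shi-type estimates bound covariant derivatives of curvature, not $\partial g$ in a fixed chart, and the evolution $\partial_t\Gamma\sim\nabla\Ricci$ only controls the \emph{change} of $\Gamma$, not its initial size. Consequently the term $-F\,\Delta_{g(t)}(\phi^2)$ in your computation of $(\partial_t-\Delta_{g(t)})(\phi^2F)$ cannot be bounded by $C/N$, and the key differential inequality you feed into the ODE comparison is not justified. (The parts you flag as delicate -- the Kato/Young absorption of the cross term and the $C^0$ comparability $e^{-ct}g_0\leq g(t)\leq e^{ct}g_0$ -- are in fact fine; only the gradient term $|\nabla_{g(t)}\phi|^2$ needs merely $C^0$ control of $g(t)^{-1}$, but the Laplacian of $\phi$ does not.)

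This is exactly the difficulty the paper's proof is built to avoid: instead of a coordinate cutoff it localizes with the evolving geodesic distance $\rho(x,t)=\dist(g(t))(x,x_0)$, using the barrier $f=|\Riem|^2-\si^2(1+\rho^2)e^{at}$. Then $|\nabla\rho|=1$, $\Delta_{g(t)}\rho^2$ is controlled by the Hessian (Laplacian) comparison theorem using only the curvature bound $c_1$, the time derivative $\partial_t\rho$ is controlled by the Ricci bound, and the non-smoothness of $\rho$ at the relevant maximum point is handled by Calabi's trick; the metric equivalence $c_2(c_0,c_1,n)$ with $I$ is used only to guarantee that minimizing geodesics from $x_0$ to points of the smaller ball stay inside $U$ and that the boundary values of $f$ are negative for $N\geq N_*$. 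To repair your argument you would either have to reproduce this distance-function localization (at which point you have the paper's proof), or add a hypothesis controlling $\partial g_0$ in coordinates (e.g.\ work in harmonic coordinates), which the lemma does not grant you.
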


\begin{proof}
Clearly, in view of the conditions in the assumption, and the equation of evolution for the metric, we have
 $$  {1 \on c_2(c_0,c_1,n)}I \leq g(t)  \leq  c_2(c_0,c_1,n) I \ \forall  \ t \in [0,T)$$
Set 
$$f(\cdot,t):= |\Riem|^2(\cdot, t) - \si^2(1 + \rho^2)\exp^{at},$$ where $a$ is to be chosen.
and $\rho(x,t) := \dist(g(t))(x,x_0).$
Then
\begin{eqnarray*}
\partt f \leq&&  \lapg f + 4|\Riem|^3 - a \si^2(1 + \rho^2)\exp^{at} - 2 \si^2(\partt \rho) \rho\exp^{at}
+ \si^2 \exp^{at}\lapg(\rho^2) \cr
 \leq  && \lapg f + 4 c_1|\Riem|^2  - a \si^2(1 + \rho^2)\exp^{at} \cr
&& + 4 \si^2(n-1)c_1 \rho^2 \exp^{at}
+ c_1\si^2 \exp^{at}c(c_1,n) \rho  \cr
  \leq  && \lapg f + 4 c_1|\Riem|^2 - {a \on 2} \si^2(1 + \rho^2)\exp^{at} \cr 
\end{eqnarray*}  
for all $x \in B_{{N\on 2c^2_2}}(0)$
for appropriately chosen $a = a(n,c_1),$ where here we have used
the Hessian comparison principle in order to estimate $\lapg(\rho^2),$
and the fact that all distance minimising
geodesics (in terms of $g(t)$) between $0$ and  points in  ${^I B}_{{N\on 2c^2_2}}(0)$ must lie in ${^I B}_{N}(0)$
(this last fact may be seen as follows: the length of a ray from $0$ to 
$p \in \boundary B_{{N\on 2c^2_2}}(0)$ is trivially  bounded from above by 
$ c_2 { N \on 2 c^2_2} = { N \on 2 c_2 }.$  For any curve starting from $0$ which reaches the boundary of $B_{N}(0),$
the length is bounded from below by ${1 \on c_2}N$). 
Using the definition of $f$, we get 
\begin{eqnarray*}
\partt f & \leq & \lapg f + 4 c_1 f + 4 c_1\si^2(1 + \rho^2)\exp^{at} - {a \on 2} \si^2(1 + \rho^2)\exp^{at} < 
4 c_1 f,
\end{eqnarray*} 
for appropriately chosen $a = a(n,c_1).$
Now for 
$\si^2 \geq \ep^2$ 
we have $f(\cdot,0) < 0 $ on $B_{{N\on 2c^2_2}}(0).$
Now choose $N = N(\si, c_0,c_1,n)$ 
so large that  $f(\cdot,t) < 0$ on $\boundary B_{{N\on 2c^2_2}}(0)$ for all $t \in [0,T).$
The maximum principle then implies that
$f$ is less than zero for all $t \in [0,T),$
for all $x \in B_{{N\on 2c^2_2}}(0).$
Note that although $\rho^2$ is not smooth everywhere, using a trick of Calabi, we may still draw the same conclusion:
see the  proof of a Theorem 7.1 in \cite{Si} (essentially we define a new function $\ti \rho (x,t) = \rho (x,q,t) + \rho(q,x_0,t)$
for some appropriately chosen $q$ so that $\ti \rho$ is smooth in a small neighbourhood of $(p_0,t_0)$
where  $(p_0,t_0)$ is the first time and point where $f(p_0,t_0) = 0.$
Thn we define: 
$$\ti f(\cdot,t):= |\Riem|^2(\cdot, t) - \si^2(1 + \ti \rho^2)\exp^{at},$$
and argue with $\ti f:$
due to the triangle inequality we have $\ti f \leq f$ and hence
$ \ti f < 0$ for $ t \leq t_0.$
Furthermore $\ti f(x_0,t_0) = 0,$ as $q$ lies on a shortest geodesic between
$x_0$ and $p_0$ at time $t_0$.
Hence we may apply the maximum principle and still obtain a contradiction.
Hence  $f < 0.$
\end{proof}

So we see that if a local region is relatively well controlled at time zero, and the curvature behaves globally like
$c \on t$ near time zero, then we can show that a (well defined) smaller region remains well behaved for a 
a well defined time interval. 

\vskip 0.1 true in

The following remarks did not appear originally in \cite{HaSi}.

\begin{rema}
Notice that the condition $\Idist^2(x) \geq Nt$ can be removed, since if $\Idist^2(x) \leq Nt$
then $|\Riem(x,t)|\Idist^2(x) \leq |\Riem(x,t)|Nt \leq c_0N$ if the conditions of the theorem are satisfied.
\end{rema}
\begin{rema}
$\Idist^2(x) = (1 -|x|)^2.$
\end{rema}

\section{Local results for solutions $f$ to the heat equation which satisfy a bound of the form $f\leq \frac 1 t $. }
The following is taken directly from Chapter 7 of \cite{HaSi}.

In fact, the important bound which leads to the locality result (Theorem \ref{pseudolemma}), is the bound of the
form $|\Riem| \leq {c \on t}.$ The argument was a scaling argument which used the parabolic maximum principle, 
and didn't really have anything to do with Ricci-flow.
We illustrate this somewhat more precisely, by showing that a similar result holds for the heat flow.

\begin{lemm}\label{pseudoheatlemma}
Let $f$ be a smooth solution to the heat flow on $B_2(0)\timess [0,1]$ with 
\begin{eqnarray}
&&ft|_{B_1(0)\timess [0,1]}  \leq 1 \ \ \forall t \in [0,1], \label{tcon} \\ 
&&f(x,t)  \geq 0, \ \ \forall x \in B_2(0), t \in [0,1] \label{tcon2} \\
&&\sup_{{\bar B}_1(0)} f(x,0) (1 - |x|^2)^2  \leq 1   \label{tcon3} 
\end{eqnarray}
Then
\begin{eqnarray*}
 \sup_{{\bar B}_1(0)} f(x,t) ( (1 - |x|^2)^2 - 50nt)  & \leq 50n 
\end{eqnarray*}
for all $t \in [0,1].$
\end{lemm}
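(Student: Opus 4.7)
The plan is a parabolic maximum principle argument applied to the auxiliary function
$$F(x,t) := f(x,t)\,(1-|x|^2)^2 - 50n\,t\,f(x,t) - 50n$$
on the compact cylinder $\bar B_1(0)\times[0,1]$; the conclusion of the lemma is exactly $F\leq 0$ there. Writing $\phi(x):=(1-|x|^2)^2$, routine computation gives $\nabla\phi = -4(1-|x|^2)x$, $\Delta\phi = -4n+(4n+8)|x|^2 \geq -4n$, and $|\nabla\phi|^2 = 16(1-|x|^2)^2|x|^2 \leq 16\phi$ on $\bar B_1(0)$.

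First I would check that $F<0$ on the parabolic boundary of $\bar B_1(0)\times[0,1]$: hypothesis \eqref{tcon3} yields $F(\cdot,0)\leq 1-50n<0$, while \eqref{tcon2} and $\phi=0$ on $\{|x|=1\}$ give $F\leq -50n<0$ there. If the lemma fails, then by continuity there is a first time $t_0>0$ and a point $x_0\in B_1(0)$ (necessarily in the open ball) with $F(x_0,t_0)=0$ and $F\leq 0$ on $\bar B_1(0)\times[0,t_0]$. At $(x_0,t_0)$ the first-variation conditions give $\partial_t F\geq 0$, $\nabla F=0$ and $\Delta F\leq 0$, hence $(\partial_t-\Delta)F\geq 0$.

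The core calculation uses $\partial_t f = \Delta f$ to cancel the second-order $f$-terms, leaving
$$(\partial_t-\Delta)F = -50n\,f - 2\,\nabla f\cdot\nabla\phi - f\,\Delta\phi.$$
The identity $\nabla F=0$ rearranges to $(\phi-50nt)\nabla f = -f\,\nabla\phi$; combined with $F=0$, i.e.\ $f(\phi-50nt)=50n$ (which forces $f>0$ and $\phi-50nt = 50n/f > 0$ at $(x_0,t_0)$), this gives $\nabla f\cdot\nabla\phi = -f^{2}|\nabla\phi|^{2}/(50n)$. Substituting the bounds $|\nabla\phi|^2\leq 16\phi$ and $\Delta\phi\geq -4n$, and then using the identity $f\phi = 50n+50n\,t f$, yields
$$(\partial_t-\Delta)F \;\leq\; -50n\,f + \frac{32\,f^{2}\phi}{50n} + 4n\,f \;=\; f\bigl[-46n + 32 + 32\,tf\bigr].$$
The hypothesis \eqref{tcon} finally gives $tf\leq 1$, so $(\partial_t-\Delta)F \leq f\,(-46n+64) < 0$ whenever $n\geq 2$, contradicting $(\partial_t-\Delta)F\geq 0$.

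The main subtlety is the gradient term $-2\nabla f\cdot\nabla\phi$: it can be controlled only because $\nabla F=0$ trades $\nabla f$ for $-f\nabla\phi/(\phi-50nt)$, which becomes quadratic in $f$ and is then linearised by the pointwise hypothesis $tf\leq 1$. The constant $50n$ is tuned precisely so that the $-50nf$ contribution dominates the $+64f$ that arises from the gradient and Laplacian terms once these two reductions are made; this forces the argument to require $n\geq 2$ (for $n=1$ one would need to enlarge the coefficient of $t$ in the definition of $F$). Beyond this, the proof is a clean interior maximum principle on the smooth parabolic cylinder $\bar B_1(0)\times[0,t_0]$, with no further technicalities, since $f$ is $C^\infty$ on a neighbourhood $B_2(0)\times[0,1]$ of that cylinder.
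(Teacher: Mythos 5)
Your argument is essentially the paper's own proof: the same barrier $l(x,t)=f(x,t)\bigl((1-|x|^2)^2-Mt\bigr)-M$ with $M=50n$, the same strict negativity on the parabolic boundary, and the same contradiction at a first interior zero via the maximum principle. The only structural difference is cosmetic: at the first zero you eliminate $\nabla f$ directly from $\nabla F=0$, writing $\nabla f=-f\nabla\phi/(\phi-Mt)$ with $\phi=(1-|x|^2)^2$, whereas the paper rewrites $-2\nabla f\cdot\nabla\phi$ as a drift term in $\nabla l$ (harmless at the maximum) plus the same quadratic error term; likewise the paper's separate case $\phi-Mt_0\le\frac{M}{2}t_0$ is replaced in your version by the observation that $F=0$ forces $\phi-Mt_0=M/f>0$, and in both versions the hypothesis $tf\le 1$ enters at exactly this point.

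The one substantive issue is your dimension restriction. Relative to the statement as given (constant $50n$ for every $n\ge1$) the failure at $n=1$ is a genuine gap: your final bound $f(-46n+64)$ is positive for $n=1$, and even retaining the discarded good terms (the $-(4n+8)f|x|^2$ from $\Delta\phi$ and the factor $|x|^2$ in $|\nabla\phi|^2$) one only improves it to $f(-46+52|x_0|^2)$, which can still be positive, so the method with $M=50$ does not close in one dimension; one would need a larger $M$ (e.g.\ $M\ge 60$ suffices by your own estimates), which changes the stated conclusion. You should know, however, that the paper's computation faces the same difficulty: it uses $\nabla_i\phi\nabla_i\phi$ with an apparent factor-of-four slip (the correct value is $16|x|^2(1-|x|^2)^2$, so the error term is $32f|x|^2\phi/(\phi-Mt)\le 96f|x|^2$, not $8f|x|^2\phi/(\phi-Mt)\le 24f|x|^2$), and with the corrected factor the paper's final inequality also fails for small $n$ unless $M$ is enlarged or the sharper substitution you use (via $f\phi=M(1+tf)\le 2M$) is invoked. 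So your bookkeeping is in fact the more careful one; the honest statement provable by this argument is the lemma with constant $\max(50n,60)$, or as stated for $n\ge2$, and it is worth flagging that explicitly rather than leaving $n=1$ as an aside.
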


\begin{proof}
Set $$l(x,t) := f(x,t)( (1 - |x|^2)^2 - Mt)  - M,$$ where
$M = 50n$. 
Then $l \in C^{\infty}( \bar{B}_{3 \on 2 }(0)\timess [0,1]),$ and $l(x,0) \leq - 50,$ and
$l(\cdot,t) \leq -50n <0$ on
$\boundary(  {B}_{1}(0) )$ for all $t \in [0,1].$
Hence, if there is a time and point 
$(x,t) \in \bar{B}_{1}(0)\timess [0,1]$ where $l(x,t) \geq  0,$ then due to compactness, there must be a first time $t_0$ 
and point $x_0 \in B_1(0),$ where $l(x_0,t_0) = 0$ (possibly there is more than one point $x_0$, but there exists at least one
point $x_0$).
Assume that $(x_0,t_0)$ is such a point, and $(x_0,t_0)$ satisfies
$ (1 - |x_0|^2)^2 - M t_0 \leq {M \on 2} t_0.$ Then
\begin{eqnarray}
l(x_0,t_0) &&= f(x_0,t_0) ( (1 - |x_0|^2)^2 - M t_0) - M \cr
&& \leq  f(x_0,t_0) {M \on 2}t_0 - M \cr
&&\leq -{M \on 2} < 0,
\end{eqnarray}
(in view of the conditions \eqref{tcon} and  \eqref{tcon2} ) which contradicts the fact that $l(x_0,t_0) = 0.$
So we may assume, without loss of generality, that
\begin{eqnarray}
 (1 - |x_0|^2)^2 - M t_0 \geq {M \on 2} t_0 \label{pos}
\end{eqnarray}
Notice that this implies
$$ {2 \on 3}(1 - |x_0|^2)^2  \geq  M t_0 $$ which further implies that
\begin{eqnarray}
  (1 - |x_0|^2)^2 -Mt_0 \geq {1 \on 3}  (1 - |x_0|)^2 \label{schluessel}
\end{eqnarray}
 
We calculate the evolution equation for $l$ for such an $(x_0, t_0).$
\begin{eqnarray}  
\partt l =&&
\lap(l) - M f - f \lap( (1- |x|^2)^2) - 2 \grad_i f \grad_i (1 -|x|^2)^2 \cr
=&& \lap (l)  - M f -  f ( 8 |x|^2 - 4 n (1 -|x|^2) ) -2 \grad_i f \grad_i (1 -|x|^2)^2 \cr
\leq&&  \lap(l) - M f + 4n f  -2 \grad_i f \grad_i (1 -|x|^2)^2 \cr
\leq&&  \lap(l) - {M \on 2} f -2 \grad_i f \grad_i (1 -|x|^2)^2 \cr
=&& \lap(l) - {M \on 2} f -\frac{2 \grad_i( f(  (1 -|x|^2)^2 -Mt))  \grad_i (1 -|x|^2)^2 }{  ((1 -|x|^2)^2 -Mt)} \cr
&& + 2f  {\grad_i  (1 -|x|^2)^2  \grad_i (1 -|x|^2)^2 \on  ((1 -|x|^2)^2 -Mt)}  \cr
=&&  \lap(l) - {M \on 2} f -{2 \grad_i( f(  (1 -|x|^2)^2 -Mt))  \grad_i (1 -|x|^2)^2 \on  ((1 -|x|^2)^2 -Mt)} \cr
&& + 8 f   |x|^2 {(1 -|x|^2)^2  \on   ((1 -|x|^2)^2 -Mt)} 
\end{eqnarray}
and hence, in view of the inequality \eqref{schluessel},
\begin{eqnarray}  
{\partt l } 
&&\leq  \lap(l)  - {M \on 2} f-{2 \grad_i l \grad_i (1 -|x|^2)^2 \on  ((1 -|x|^2)^2 -Mt)}
+ 24 f   |x|^2  \cr
&& \leq   \lap(l) -{2 \grad_i l \grad_i (1 -|x|^2)^2 \on  ((1 -|x|^2)^2 -Mt)}
+ 24 f - {M \on 2} f  \cr
&& <    \lap(l) -{2 \grad_i l \grad_i (1 -|x|^2)^2 \on  ((1 -|x|^2)^2 -Mt)}.
\end{eqnarray}
As $l(x_0,t_0)$ is a local maximum, we obtain a contradiction
(note that  $((1 -|x_0|^2)^2 - Mt_0)  > 0 $ due to the assumption
\eqref{pos}).
\end{proof}

In fact a scaled version of
this lemma is true whenever we have at most polynomial growth of $f$ in $t,$ as the following lemma shows.

\begin{lemm}\label{pseudoheatlemmatwo}
Let $f$ be a smooth solution to the heat flow on $B_2(0)\timess [0,1]$ with 
\begin{eqnarray}
&&ft^p|_{ B_1(0)\timess [0,1]}  \leq 1 \label{tconp} \cr 
&&f(x,t)  \geq 0, \ \ \forall x \in B_2(0), t \in [0,1] \\
&&\sup_{x \in {\bar B}_1(0)} (f(x,0)+1)^{1 \on p} (1 - |x|^2)^2  \leq 2  
\end{eqnarray}
($p \geq 1$).
Then
\begin{eqnarray*}
 \sup_{x \in {\bar B}_1(0)} (f(x,t) +1)^{1 \on p} ( (1 - |x|^2)^2 - M(n,p)t)  & \leq c(n,p) \ \ \forall t \in [0,1].
\end{eqnarray*}
\end{lemm}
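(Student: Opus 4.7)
The plan is to introduce the auxiliary function $u:=(f+1)^{1 \on p}$ and rerun the proof of Lemma~\ref{pseudoheatlemma} with $u$ in place of $f$, fixing the constants $M(n,p)$ and $c(n,p)$ only at the end. Three preparatory observations are made: first, the polynomial bound $ft^p\leq 1$ together with $f\geq 0$ and $t\in(0,1]$ gives $f+1\leq 2/t^p$ (since $1\leq 1/t^p$), hence $u(x,t)\,t\leq 2^{1 \on p}\leq 2$; second, the initial data assumption reads exactly $u(x,0)(1-|x|^2)^2\leq 2$; third, since $u^p=f+1$ and $\partt f=\lap f$, a direct calculation gives
\begin{eqnarray*}
\partt u=\lap u+\frac{p-1}{u}|\grad u|^2,
\end{eqnarray*}
so $u$ is a supersolution of the heat equation, and the extra nonnegative gradient term is the only genuine new ingredient compared to Lemma~\ref{pseudoheatlemma}.

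Set $l(x,t):=u(x,t)\bigl((1-|x|^2)^2-Mt\bigr)-C$ with $M=M(n,p)$ and $C=C(n,p)$ to be fixed. As in Lemma~\ref{pseudoheatlemma} one has $l(\cdot,0)\leq 2-C$ and $l|_{\boundary B_1(0)\times[0,1]}=-Mt\,u-C<0$ provided $C\geq 2$. Assume for contradiction that $l$ becomes positive somewhere and let $(x_0,t_0)$ be the first time and point where $l(x_0,t_0)=0$. Split into the two cases $(1-|x_0|^2)^2-Mt_0\leq\tfrac{M}{2}t_0$ and $(1-|x_0|^2)^2-Mt_0>\tfrac{M}{2}t_0$ exactly as in the $p=1$ proof. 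In the first case the bound $u\,t_0\leq 2$ gives $l(x_0,t_0)\leq u\cdot\tfrac{M}{2}t_0-C\leq M-C$, which is strictly negative once $C>M$, contradicting $l(x_0,t_0)=0$.

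In the second case the analogue of \eqref{schluessel} is $(1-|x_0|^2)^2\leq 3\bigl((1-|x_0|^2)^2-Mt_0\bigr)$. Writing $h:=(1-|x|^2)^2-Mt$, the product rule together with the PDE for $u$ yields
\begin{eqnarray*}
\partt l-\lap l=\frac{p-1}{u}|\grad u|^2\,h\,-\,Mu\,-\,2\grad u\cdot\grad((1-|x|^2)^2)\,-\,u\lap((1-|x|^2)^2).
\end{eqnarray*}
The two gradient terms are handled at the maximum via the first-order condition $\grad l(x_0,t_0)=0$, which forces $\grad u=-u\grad((1-|x|^2)^2)/h$. Substituting, the new term becomes $(p-1)u|\grad((1-|x|^2)^2)|^2/h$ and the old cross term becomes $2u|\grad((1-|x|^2)^2)|^2/h$. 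The elementary estimates $|\grad((1-|x|^2)^2)|^2\leq 16(1-|x|^2)^2$ and (by the case hypothesis) $(1-|x|^2)^2/h\leq 3$ bound their sum by $48(p+1)u$. Combined with $-u\lap((1-|x|^2)^2)\leq 4nu$, this yields
\begin{eqnarray*}
\partt l-\lap l\leq \bigl(48(p+1)+4n-M\bigr)u\quad\text{at }(x_0,t_0).
\end{eqnarray*}
Choosing $M(n,p):=48(p+1)+4n+1$ and then $C(n,p):=M(n,p)+1$ makes the right-hand side strictly negative, contradicting $\partt l\geq 0$, $\lap l\leq 0$ at a first interior maximum.

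The only genuine difficulty compared to Lemma~\ref{pseudoheatlemma} is the extra term $\tfrac{p-1}{u}|\grad u|^2$ in the evolution equation for $u$; and the main point I am exploiting is that at the touching maximum of the barrier $l$ the critical relation $\grad l=0$ converts this gradient-squared term into an explicit multiple of $u|\grad h|^2/h$, which is structurally the same as the cross term already handled in the $p=1$ case and can therefore be swallowed by slightly enlarging the linear-decay constant $M$. Everything else, including the choice of initial and spatial-boundary data, the two-case split, and the absorption of $-u\lap((1-|x|^2)^2)$ into a multiple of $u$, is a verbatim adaptation of the proof of Lemma~\ref{pseudoheatlemma}.
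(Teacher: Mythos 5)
Your proposal is correct and follows essentially the same route as the paper: the same substitution $u=(1+f)^{1\on p}$ (with the same evolution equation, up to the exact constant in front of $|\grad u|^2/u$), the same barrier $u\bigl((1-|x|^2)^2-Mt\bigr)-\mathrm{const}$, the same two-case split at the first touching point, and the same absorption of the extra gradient term into a multiple of $u$. The only cosmetic difference is that you eliminate the gradient terms by plugging in $\grad l(x_0,t_0)=0$ directly, whereas the paper packages them as $Z_i\grad_i\ti l$ before invoking the first-maximum conditions.
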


\begin{proof}

If $$ f t^p \leq 1 $$ then $$(1 + f)t^p \leq 2 \ \forall \ t \leq 1,$$
and hence $$(1 + f)^{1 \on p} t \leq 2.$$
Setting $$ \ti f := (1 + f)^{1 \on p},$$ we have 
\begin{eqnarray}
&& \ti f t \leq  2, \cr 
&& {\ti f}(x,0)(1 - |x|^2)^2 \leq 2. 
\end{eqnarray}   
Using the fact that $f$ solves the heat equation and $\ti f \geq 1$, we calculate
\begin{eqnarray}
\partt \ti f = \lap (\ti f) - {1 \on p}({1 \on p}-1) { | \grad \ti f |^2 \on \ti f}.
\end{eqnarray}
Define $$\ti l := {\ti f}(x,t)( (1 - |x|^2)^2 - Mt)  - M,$$ where
$M = M(n,p)$ is a constant to be chosen. 
Let $(x_0,t_0)$ be a first time and point where $\ti l =0.$
Arguing as above, we obtain
\begin{eqnarray}  
 \partt \ti l & < &   \lap(\ti l) -{2( \grad_i \ti l) V_i}  - {M \on 2} \ti f  - {1 \on p}({1 \on p}-1) ( (1 - |x|^2)^2 - Mt)
 { | \grad \ti f |^2 \on \ti f} \label{mainpeq}
\end{eqnarray}
where 
$$V_i = { \grad_i (1 -|x|^2)^2 \on  ((1 -|x|^2)^2 -Mt)}.$$
Arguing as in the previous lemma, we see that we may assume that  
$((1 - |x|^2)^2 - Mt) \geq {1 \on 3} (1 - |x|^2)^2,$ 
and so $V_i$ can be considered to be smooth in a small neighbourhood of 
 $(x_0,t_0)$.
But the last term in the above inequality can also be estimated, as the following calculation shows.
\begin{eqnarray}
{c(p) \on \ti f}((1 - |x|^2)^2 - Mt)   \grad_i \ti f \grad_i \ti f   =&&
{c(p)\on \ti f} \grad_i (( (1 - |x|^2)^2 - Mt)  \ti f) \grad_i \ti f  \cr
&& - c(p) \grad_i ( (1 - |x|^2)^2 - Mt) \grad_i \ti f.   \label{eqcp}
\end{eqnarray}
Remembering that we may assume that  
$((1 - |x|^2)^2 - Mt) \geq {1 \on 3} (1 - |x|^2)^2,$ we get
\begin{eqnarray}
\lefteqn{- c(p) \grad_i ((1 - |x|^2)^2 - Mt) \grad_i \ti f  } \cr 
= && -  {c(p) \on ( (1 - |x|^2)^2 -Mt) }  \grad_i ( (1 - |x|^2)^2 - Mt) \grad_i (\ti f ( (1 - |x|^2)^2 - Mt)) \cr
&& +  c(p)  {\ti f  \on ( (1 - |x|^2)^2 - Mt) }\grad_i (1 - |x|^2)^2  \grad_i (1 - |x|^2)^2  \cr
\leq && - {3 c(p) \on ( (1 - |x|^2)^2  }   \grad_i ( (1 - |x|^2)^2 - Mt) \grad_i (\ti f ( (1 - |x|^2)^2 - Mt)) \cr
&& + 32 c(p) \ti f 
\end{eqnarray}
Substituting this inequality into \eqref{eqcp}, we get
$$ {c(p) \on \ti f}((1 - |x|^2)^2 - Mt)   \grad_i \ti f \grad_i \ti f  \leq   
W_i \grad_i \ti l + 12 c(p) \ti f,$$
is a smooth vector field which is defined in a small neighbourhood (space and time) of the point $(x_0,t_0).$ 
Substituting this inequality into  equation \eqref{mainpeq}
we get 
\begin{eqnarray}  
 \partt (\ti l) & < &  \lap(\ti l) +  \grad_i (\ti l) Z_i. 
\end{eqnarray}
at the point $(x_0,t_0),$ where $Z$ is a smooth vector field which is defined in a small neighbourhood (space and time) of the point $(x_0,t_0).$ 
This gives  us a contradiction,
as $(x_0,t_0)$ is a local maximum for $\ti l.$

\end{proof}

\vskip 0.1 true in

\section*{Acknowledgements\markboth{Acknowledgements}{Acknowledgements}}

We would like to thank Peter Topping for helpful discussions
on Harmonic map heat flow and the Pseudolocality result of Perelman. 
Thanks to Klaus Ecker, Gerhard Huisken and Ernst Kuwert for their interest in this
work.

\vskip 0.1 true in
\rm

\vskip 0.3 true in
\centerline{Mathematisches Institut, Eckerstr. 1, 79104 Freiburg im Br., Germany }
 \centerline{e-mail: msimon at gmx.de }

\end{document}